\renewcommand{\mathcal}{\mathscr}
\renewcommand{\epsilon}{\varepsilon}
\newcommand{\dimH}{\dim_H}
\newcommand{\diam}{\textrm{diam}}
\newcommand{\RR}{\mathbb R}
\newcommand{\EE}{\mathbb E}
\newcommand{\PP}{\mathbb P}
\newcommand{\NN}{\mathbb N}
\DeclareMathOperator{\dist}{dist}
\newtheorem{theorem}{Theorem}[section]
\newtheorem{lemma}[theorem]{Lemma}
\newtheorem{corollary}[theorem]{Corollary}
\theoremstyle{definition}
\newtheorem{example}[theorem]{Example}
\theoremstyle{remark}
\newtheorem{remark}[theorem]{Remark}
\numberwithin{equation}{section}
\begin{document}

\title{Distribution of  random Cantor sets  on Tubes }
\author{Changhao Chen}
\address{Department of Mathematical Sciences, P.O. Box 3000, 90014
University of Oulu, Finland}
\email{changhao.chen@oulu.fi}

\date{\today}
\subjclass[2010]{ 60D05,
28A78, 28A80.}

\thanks{The author was supported by the Vilho, Yrj{\"o}, and Kalle V{\"a}is{\"a}l{\"a} foundation.}

\begin{abstract}
We show that there exist $(d-1)$ - Ahlfors regular  compact sets $E \subset \mathbb{R}^{d}, d\geq 2$ such that for any $t< d-1$, we have  
\[
\sup_T \frac{\mathcal{H}^{d-1}(E\cap T)}{w(T)^t}<\infty
\]
where the supremum is over all tubes $T$ with width $w(T) >0$. This settles a question of T. Orponen. The sets  we construct are random Cantor sets, and the method combines  geometric and probabilistic estimates on the intersections of these random Cantor sets with affine subspaces. 
\end{abstract}

\maketitle

\section{introduction}

A set $E \subset \RR^d (d \geq 2)$ is called tube null if for any $\epsilon> 0$, there exist countable many tubes $\{T_i\}$  covering $E$ and $\sum_i w(T_i)^{d-1} < \epsilon$. Here and in what follows, a tube $T$ with width $w= w(T) > 0$ is the $w/2$- neighborhood of some line in $\mathbb{R}^d$. We always assume that our tubes have positive width. 

This notion comes from the study of the localisation problem of the Fourier transform in dimension $d\geq 2$ ( this problem can be regarded as looking for the analogues of Riemann's localization principle in higher dimensions).  In \cite{csv}, they proved that if  $E \subset B$ ( here $B$ is the unit ball of $\RR^d$) is tube null, then $E$ is a Set of Divergence for the Localisation Problem ($SDLP$). It's an open problem whether every  $SDLP$ is  tube null, for more details see \cite{csv}. 

%Indeed, tube null  is  interesting in it's own right. 

It's easy to see that a set $E \subset \RR^d$ with $\mathcal{H}^{d-1}(E)=0$ is tube null. Indeed,   \cite[Proposition 7]{csv} claims that if  $E \subset \RR^d$ with $0<\mathcal{H}^{d-1}(E)<\infty$, then $E$ is tube null. This implies 
\begin{equation}\label{start}
\sup_T \frac{\mathcal{H}^{d-1}(E\cap T)}{w(T)^{d-1}}=\infty. 
\end{equation} 
Since if there is a positive constant $C$ such that $\mathcal{H}^{d-1}(E\cap T) \leq C w(T)^{d-1}$ for all tubes $T$, then for any countable family of tubes $\{T_i\}$  which cover $E$, we have 
\[
\sum_i w(T_i)^{d-1} \geq C^{-1}\sum_i \mathcal{H}^{d-1}(E\cap T_i) \geq C^{-1}\mathcal{H}^{d-1}(E),
\]
which would contradict the tube nullity of $E$. Thus \eqref{start} holds.  In \cite{h}, they showed that the Von Koch curve is tube null. For more tube null examples, see \cite{csv}.

For the sets which are not tube null, in \cite{csv} they showed that for any $s \in (d-1/2, d)$, there exists set $E$ with $\dim_H(E)=s$ and $E$ is not tube null. The sharp low bound of above $s$ was obtained in \cite{ss}, they proved that there exist set with Hausdorff dimension $d-1$ which are not tube null (thus answered the question of \cite{csv}).  

Motivated by \cite[Proposition 1 ]{c},  Carbery asks to determine which pairs $(s, t) \in [0, d] \times [0, d]$ are
admissible in the sense that there exists a set $E \subset \mathbb{R}^d$ with $0<\mathcal{H}^{s}(E)<\infty$ and satisfies
\begin{equation}\label{admissible}
\sup_T \frac{\mathcal{H}^{s}(E\cap T)}{w(T)^t}<\infty.
\end{equation}
This problem can be regarded as to concern the distribution of sets on tubes. By the works of \cite{c, csv, ss, t} (different contributions), we know that all the pairs $(s, t)$ with $t \leq \min \{ d- 1, s \}$ except $(d-1, d-1)$ are admissible. In \cite{t}, Orponen raised the following question: is it possible to construct a set $E\subset \mathbb{R}^d$ with $0 < \mathcal{H}^{d-1} (E) < \infty$ such that for every $t < d-1$, 
\begin{equation}\label{question}
\sup_T \frac{\mathcal{H}^{d-1}(E\cap T)}{w(T)^t}<\infty ?
\end{equation}

We are able to settle this question.

\begin{theorem}\label{answer}
There exists a $(d-1)$ - Ahlfors regular  compact set $E \subset \mathbb{R}^{d}$, such that for every $t< d-1$,  
\begin{equation}
\sup_T \frac{\mathcal{H}^{d-1}(E\cap T)}{w(T)^t}<\infty.
\end{equation}
Recall that $E \subset \mathbb{R}^d$ is called $Q$-Ahlfors regular for $0<Q \leq d$, if  there exist positive constant $C$ such that $r^Q/C \leq \mathcal{H}^{Q}(E\cap B(x,r)) \leq C r^Q$
for all  $x \in E$ and $0<r< \diam(E),$ where $\diam(E)$ denotes the diameter of $E$. 
\end{theorem}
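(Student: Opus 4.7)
The plan is to realize $E$ as a self-similar random Cantor set in $[0,1]^d$ by a Mandelbrot cascade. Fix a large integer $N$. At stage $k$ we have a family $\mathcal{Q}_k$ of cubes of side $N^{-k}$; inside each $Q\in\mathcal{Q}_k$, partition $Q$ into $N^d$ congruent subcubes and, independently for each $Q$, choose uniformly at random exactly $N^{d-1}$ of them to form the children of $Q$. Setting $E_k=\bigcup_{Q\in\mathcal{Q}_k}Q$ and $E=\bigcap_k E_k$, the natural mass distribution $\mu$ assigning weight $N^{-k(d-1)}$ to each $Q\in\mathcal{Q}_k$ is well defined because each parent has exactly $N^{d-1}$ children. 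A standard counting argument, together with verifying that $E\neq\emptyset$ almost surely via the associated supercritical branching process, then shows that $\mu(B(x,r))\asymp r^{d-1}$ uniformly for $x\in E$ and $r\in(0,1)$, so $E$ is $(d-1)$-Ahlfors regular almost surely.

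For the tube estimate, given a tube $T$ of width $w\in(0,1)$, I would fix the scale $k$ with $N^{-k}\leq w<N^{-k+1}$, let $N_k(T)$ denote the number of cubes of $\mathcal{Q}_k$ meeting $T$, and bound
\[
\mathcal{H}^{d-1}(E\cap T)\lesssim N_k(T)\cdot N^{-k(d-1)}.
\]
A routine geometric covering estimate gives that the number of level-$k$ subcubes of $[0,1]^d$ meeting $T$ is at most $C(wN^k)^{d-1}N^k$, and each such cube survives all $k$ levels of selection with probability $N^{-k}$. Hence $\EE[N_k(T)]\lesssim (wN^k)^{d-1}$. A Chernoff/Azuma--Hoeffding estimate applied to the level-by-level filtration of the cascade should yield a tail bound of the form
\[
\PP\bigl(N_k(T)\geq \lambda\,(wN^k)^{d-1}+\lambda\bigr)\leq e^{-c\lambda},\qquad \lambda\geq 1.
\]
Union-bounding over a fine net in the $(2d-2)$-dimensional parameter space of tubes of width $\sim N^{-k}$ (which has cardinality $N^{Ck}$) and choosing $\lambda=C'k$, a Borel--Cantelli argument shows that almost surely every tube $T$ of sufficiently small width satisfies $N_k(T)\lesssim k\,(wN^k)^{d-1}+k$. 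Combined with the display above this gives $\mathcal{H}^{d-1}(E\cap T)\lesssim k\,w^{d-1}$, which for each fixed $t<d-1$ is dominated by $w^t$ once $w$ is small, proving the theorem.

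The main obstacle is the concentration step: because $\EE[N_k(T)]$ can be as small as $O(1)$ for the thinnest tubes, one is forced into the non-Gaussian regime of Chernoff bounds, and these tails must still beat a union bound over a parameter space of size polynomial in $N^k$. A further subtlety is to pass from a discrete net of tubes to all tubes; this is handled by the observation that $N_k(T)$ is essentially monotone under thickening $T$ by a bounded factor, so controlling the count on a sufficiently fine net suffices. I expect that once the concentration/union-bound step is carried out cleanly, scale-by-scale aggregation and the Ahlfors regularity from the first paragraph assemble into the full statement without further difficulty.
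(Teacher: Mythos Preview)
Your overall architecture --- take a self-similar random Cantor set with $N^{d-1}$ children per cube, verify Ahlfors regularity from the mass distribution, prove a concentration estimate for the intersection with lines/tubes at each scale, union-bound over a net of tubes, and conclude by Borel--Cantelli --- is exactly the route the paper takes. The difference lies entirely in the concentration step, and here your proposed bound is too strong to be true.

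For a tube of width $w\sim N^{-k}$ around a line $W$, the count $N_k(T)$ is essentially the population at generation $k$ of a branching process: each selected cube at level $j-1$ has some number $a_Q\le C_d N$ of children meeting $W$, each retained with probability $N^{-1}$. Measured by $\mathcal H^1(W\cap E_j)$ (rather than by cube count) this process is \emph{exactly critical}: the paper shows $Y_n^W:=(P_n r_n^d)^{-1}\mathcal H^1(W\cap E_n)$ is a martingale. For a critical branching process with finite offspring variance one has $\PP(Z_k\ge C'k)\asymp k^{-1}$, not $e^{-cC'k}$; so your claimed tail $\PP(N_k(T)\ge 2\lambda)\le e^{-c\lambda}$ with $\lambda=C'k$ cannot hold uniformly in $k$, and the union bound over $N^{Ck}$ tubes blows up. Concretely, in $d=2$, $N=2$ with a vertical line the offspring law is $(0,1,2)$ with probabilities $(1/6,2/3,1/6)$, mean $1$, and the Yaglom limit gives $\PP(Z_k\ge C'k)\sim c/k$.

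The paper's fix is twofold. First, it works with the length martingale $Y_n^W$ rather than the cube count, which makes the iterated exponential-moment bound $\EE(e^{\lambda Y_n^W}\mid E_{n-1})\le e^{(1+\lambda_0)\lambda Y_{n-1}^W}$ clean. Second --- and this is the point your argument misses --- it does \emph{not} try to pin $N_k(T)$ down to order $k$; instead, for each fixed $t<d-1$ it allows the much larger threshold $Y_n^W\le R\,r_n^{t-(d-1)}$ (equivalently $N_n(T)\lesssim r_n^{-(d-1-t)}$), for which the exponential-moment method yields a tail of size $\exp(-r_n^{-\epsilon})$ that annihilates the polynomial net. This gives $\mathcal H^{d-1}(E\cap T)\lesssim w^t$ for that $t$; one then sends $t\uparrow d-1$ along a sequence. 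Your conclusion $\mathcal H^{d-1}(E\cap T)\lesssim |\log w|\,w^{d-1}$ is genuinely stronger than what the paper proves and does not follow from either argument.
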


The paper is organised as follows. The random Cantor sets are introduced in Section 2 together with the required notations, definitions and results.
In Section 3 we present some geometric lemmas. Section 4 contains the main probabilistic argument. The last Section contains further discussion concerning our model and some concrete examples.

\bigskip
\noindent\textbf{Acknowledgements.} I am grateful to my  supervisor Ville Suomala for his guidance about the question in \cite{t} and for sharing his ideas. I also would like to thank
the anonymous referee for carefully reading the manuscript and giving helpful comments.

\section{Random Cantor sets and their projections}\label{random Cantor sets}

In this section, we define the random Cantor sets  and state our results for them.  Closely related random models have been consider in \cite{ckls} and \cite {ss}.

Let $(M_n)$ and $(N_n)$ be sequences of integers with $1\leq N_n\leq M_n^d$ ($M_n\ge 2$) for all $n$. Denote $r_n=\prod_{k=1}^n M_k^{-1}$, and $P_n=\prod_{k=1}^n N_k$. We decompose the unit cube $[0,1]^d$ into $M_1^d$ interior disjoint $M_1$-adic closed subcubes and randomly choose interior disjoint $N_1\leq M_1^d$ of these closed subcubes such that each of the closed subcubes has the same probability (i.e.$ N_1/ M_1^{d}$) of being chosen, and denote their union by $E_1$. Given $E_{n}$, a random collection of $P_n$  interior disjoint $r_n$ - adic closed subcubes of $[0,1]^{d},$  independently inside each of these closed cubes we choose $N_{n+1}$ interior disjoint  ($r_{n+1}$) -adic closed subcubes such that each of these closed subcubes has the same probability (i.e.$ N_{n+1}/ M_{n+1}^{d}$) of being chosen. Let $E_{n+1}$ be the union of the chosen closed cubes.  Denote by $\omega$ the element in the probability space $\Omega$ induced by the construction described above. Let $E=E^\omega$ be the random limit set
\[
E=\bigcap_{n=1}^\infty E_n.
\]
We also denote the random limit set by $E(M_n, N_n)$  when we want to stress the connection to the deterministic sequences $M_n$ and  $N_n$.

\begin{remark}\label{con}
One natural way to choose subcubes is that we first randomly choose one  such that every subcube has the same probability of being choosen. Then we choose the second subcube from the remaining subcubes such that every subcubes has the same probability of being  chosen, and go on this way. But in fact, the  above model contains more general random Cantor sets. For two specific examples see Example \ref{example}.
\end{remark}

\noindent\textbf{Important assumption:}  In this paper, we assume that $M_n$ is uniformly bounded which means that  there exists $M \in \NN$, such that  $M_n \leq M$ for every $n\in \NN$.  Then it's easy to see that all the Cantor sets $E(M_n, N_n)$ have Hausdorff dimension $s$, where \begin{equation}\label{dimension}
s=\liminf _{n\to\infty} \frac{\log P_n}{-\log r_n}.
\end{equation}

Let $G(d,m)$ denote the family of all $m$-dimensional linear subspaces of $\mathbb{R}^d$ and $A(d, m )$ denote the family of all $m$- dimensional planes of $\mathbb{R}^{d}$ that intersect the cube $[0,1]^d$. For every $V\in A(d,m)$, denote by $\pi_V$  the orthogonal projection onto $V$ and by $\dimH F$ the Hausdorff dimension of a set $F$. Recall the classical Marstand- Mattila projection theorem (See e.g \cite{fa}, \cite{ma}): Let $F \subset \mathbb{R}^d$ ($d\geq 2$) be a Borel set with Hausdorff dimension $s$. If $s\leq k$, then the orthogonal projection of $F$ onto almost all $k$-planes has Hausdorff dimension $s$;  if $s> k$, then the orthogonal projection of $F$ onto almost all $k$-planes has positive $k$-dimensional Lebesgue measure. 

Recently, there has been a growing interest in showing that for various random fractals there are a.s. no exceptional directions in the projection theorem. We will prove the following projection theorem for the above random Cantor sets.

\begin{theorem} \label{projection of sets} 
If $s\leq k$, then almost surely $\dimH \pi_V (E)= s$ for all $ V \in G(d,k)$.
\end{theorem}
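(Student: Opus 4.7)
The upper bound $\dimH \pi_V E \leq \dimH E = s$ is automatic almost surely and simultaneously for all $V$, since $\pi_V$ is $1$-Lipschitz and $\dimH E = s$ almost surely by \eqref{dimension}. The content of the theorem is the matching lower bound, which I would approach via potential theory. Let $\mu$ denote the natural random measure on $E$, obtained as the weak-$*$ limit of the uniform probability measures $\mu_n$, where $\mu_n$ distributes mass $1/P_n$ uniformly over each of the $P_n$ cubes selected at level $n$. For any $0 < t < s$ the plan is to bound $\EE[I_t(\pi_V \mu)]$ for fixed $V$, then upgrade the conclusion to a statement that is simultaneous in $V$.

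For fixed $V \in G(d,k)$, decompose $\EE[I_t(\pi_V \mu_n)] = \EE\iint |\pi_V(x-y)|^{-t}\,d\mu_n(x)\,d\mu_n(y)$ according to the level $j$ at which the cubes containing $x$ and $y$ first split. Three ingredients combine: (i)~independence of the random choices across distinct level-$j$ branches gives a pair-correlation bound $\PP(x,y \in E_n) \lesssim P_n^2 r_n^{2d}/(P_j r_j^d)$ when the last common ancestor sits at level $j$; (ii)~the Marstrand-type slicing estimate $\iint_{Q \times Q} |\pi_V(x-y)|^{-t}\,dx\,dy \lesssim r_j^{2d-t}$ for a level-$j$ cube $Q$, which is valid since $t < s \leq k$; and (iii)~there are $r_j^{-d}$ level-$j$ cubes in $[0,1]^d$. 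Combining these,
\begin{equation*}
\EE[I_t(\pi_V \mu_n)] \lesssim \sum_{j=1}^{n} \frac{r_j^{-t}}{P_j}.
\end{equation*}
The definition $s = \liminf_j \log P_j/(-\log r_j)$ and the uniform bound on $M_k$ yield $P_j^{-1} r_j^{-t} \leq r_j^{s-t-\eta}$ eventually, for any small $\eta > 0$; since $r_j$ decays geometrically, the series converges to a constant $C_t$ independent of $V$. Fatou then gives $\EE[I_t(\pi_V \mu)] \leq C_t$, so $\dimH \pi_V E \geq t$ almost surely for each fixed $V$.

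The main obstacle is upgrading this pointwise a.s.\ statement to one that holds a.s.\ simultaneously for all $V$: $G(d, k)$ is uncountable, and $V \mapsto I_t(\pi_V \mu)$ is only lower semi-continuous, so a naive union bound or continuity argument fails. To get around this I would prove a uniform Frostman-type estimate: for every $t < s$, almost surely there is a finite random constant $C(\omega)$ with
\begin{equation*}
\pi_V \mu(B(y, r)) \leq C(\omega)\, r^t \quad \text{for all } V \in G(d,k),\ y \in V,\ 0 < r < 1,
\end{equation*}
which forces $\dimH \pi_V E \geq t$ for every $V$ at once. At a dyadic scale $r = r_n$, the quantity $\pi_V \mu_n(B(y, r_n))$ is a sum of contributions indexed by the level-$n$ cubes whose $V$-projection meets $B(y, r_n)$, and these contributions are independent across distinct level-$(n-1)$ ancestors. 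A Hoeffding/Azuma-type concentration inequality applied to the resulting tree-indexed sum yields exponential tails around the mean, which is $\lesssim r_n^t$ by the calculation above. A polynomially sized net in $V$ and in $y$, together with Borel--Cantelli across scales $n$, gives the uniform bound; the passage from the net to arbitrary $V$ uses the Lipschitz dependence of $\pi_V$ on $V$ together with a small slack between $t$ and the target dimension. This concentration/net step is the principal technical input; the underlying geometry is the classical Marstrand--Mattila computation, adapted to the random Cantor setting.
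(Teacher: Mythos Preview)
Your proposal is correct and lands on essentially the paper's argument: the uniform Frostman bound you describe is exactly Lemma~\ref{main lemma}, and the paper proves it by an exponential-moment concentration estimate iterated across construction levels (Lemmas~\ref{conditionalprobabilityestimate} and~\ref{probabilityestimate}), combined with a polynomially-sized net of affine $(d-k)$-planes (Corollary~\ref{use}) and Borel--Cantelli (Corollary~\ref{limsup1}). The energy-integral computation in your first paragraph is not used in the paper and, as you yourself note, cannot give the simultaneous statement; the paper bypasses it entirely and works directly with the intersections $|W\cap E_n|$ for $W\in A(d,d-k)$ rather than with pairs $(V,y)$, the two parameterizations being equivalent via the elementary Lemma~\ref{length and area}.
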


For other random sets, same kind of results have been recently obtained e.g. in \cite{ckls, rs1, rs2, ss, ss2,  sv}. For $V \in G(d,k)$ such that $V^{\perp}$ is not parallel to any coordinate hyperplane, the claim of Theorem \ref{projection of sets} follows from \cite[Theorem 10.1]{ss2}. In this paper, we give a direct proof for Theorem \ref{projection of sets} without relying on the theory of general spatially independent martingales developed in \cite{ss2}. In particular, we verity in detail the claim of \cite[Remark 10.3 (ii)]{ss2} for the model at hand.

We consider the natural random measure on the random Cantor set. We denote by $D_n=D_n([0,1]^{d}), n\in \NN$ all the $r_n$- adic closed subcubes  of  the unit cube $[0,1]^{d}$. Let $E=\bigcap_{n=1}^\infty E_n$ be a realization. For any $n$ and $Q \in D_n$, define 
\begin{equation*}
\mu_0(Q)=
  \begin{cases}
    P_n^{-1}&\text{ if } Q \subset E_n\\
    0&\text{ otherwise } \,.
  \end{cases}
\end{equation*}
By Kolmogorov's extension theorem, there is a unique measure $\mu$ on $[0,1]^d$ such that 
$\mu(Q)=\mu_0(Q)$ for any $Q \in D_n, n\in \NN.$

In the following, tubular neighbourhoods of the elements in $A(d,m)$ are called strips ($1\leq m\leq d-1$). More precisely, a strip $S$ of width $w(S)=\delta>0$, defined by an element  $W \in A(d,m)$, is the set
\[
S=\{x \in \mathbb{R}^d\mid\dist(x,W)<\delta/2\}
\]
where $\dist$ is the Euclidean distance. We also denote this strip by $S(W)$ when it was induced by $W$. Denote by $S(d, m)$  all the strips induced by the element of $A(d,m)$ as above. Notice that we call the strips in $S(d, 1)$ tubes. 

Theorem \ref{projection of sets} is easily deduced from the following estimate for the projections of the  measure $\mu$.

\begin{lemma} \label{main lemma}
If $s \leq k$, then almost surely  for any $0<t<s$,  
\begin{equation}\label{main equation}
\sup_{S \in S(d, d-k)} \frac{\mu \left(E\cap S\right)}{w\left(S\right)^{t}} <\infty.
\end{equation}
\end{lemma}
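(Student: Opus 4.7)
The plan is to prove the bound at dyadic scales $r_n$ by a concentration-of-measure argument and then pass to arbitrary widths by monotonicity. Fix $t\in(0,s)$ and an auxiliary $\epsilon>0$ with $t+2\epsilon<s$. Since $\mu$ assigns mass $P_n^{-1}$ to each $r_n$-adic cube contained in $E_n$, every strip $S$ satisfies
\[
\mu(E\cap S)\leq P_n^{-1} X_n(S),\qquad X_n(S):=\#\{Q\in D_n:Q\subset E_n,\ Q\cap S\neq\emptyset\},
\]
so it suffices to control $X_n(S)/P_n$ uniformly over strips with $w(S)\in(r_{n+1},r_n]$ at each scale $n$.

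The first step is a single-strip large deviation bound. A direct count using that at most $Cr_n^{-(d-k)}$ cubes of $D_n$ can meet a $(d-k)$-codimensional strip of width $\leq r_n$, and that each cube lies in $E_n$ with probability $P_nr_n^d$, gives $\EE[X_n(S)]\leq CP_nr_n^k$. Because $k\geq s>t$, the mean is already well below the target $CP_nr_n^t$, so the core task is to control the upper tail. Here I would exploit the hierarchical structure: conditional on the $\sigma$-algebra $\mathcal{F}_{n-1}$ generated by $E_1,\ldots,E_{n-1}$, the numbers of children retained in $E_n$ within each cube of $E_{n-1}$ are independent hypergeometric variables. Applying a conditional Chernoff exponential moment bound at each level and iterating over generations should yield a tail of the form
\[
\PP\bigl(X_n(S)\geq CP_nr_n^t\bigr)\leq\exp\bigl(-c\,r_n^{-\alpha}\bigr)
\]
for some $\alpha=\alpha(t,s,\epsilon)>0$ and constants $c,C>0$ depending on $t,k,d$ and $M$.

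Next I would discretize the parameter space. The set $A(d,d-k)$ is a smooth manifold of fixed dimension, and admits finite $r_n$-nets $\mathcal{W}_n\subset A(d,d-k)$ with $|\mathcal{W}_n|\leq Cr_n^{-L}$ for some $L=L(d,k)$. Any strip $S(W)$ with $w(S)\leq r_n$ is contained in a strip $S(W')$ with $W'\in\mathcal{W}_n$ and $w(S(W'))\leq 3r_n$, so $\mu(E\cap S(W))\leq\mu(E\cap S(W'))$. A union bound over $\mathcal{W}_n$ combined with the single-strip tail gives
\[
\PP\bigl(\exists W'\in\mathcal{W}_n:\ \mu(E\cap S(W'))>Cr_n^t\bigr)\leq Cr_n^{-L}\exp\bigl(-c\,r_n^{-\alpha}\bigr),
\]
which is summable in $n$. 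Borel--Cantelli therefore produces a set of full probability on which $\mu(E\cap S)\leq Cr_n^t$ for all $n$ large and all strips $S$ with $w(S)\leq r_n$, and hence $\mu(E\cap S)\leq C'w(S)^t$ uniformly.

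The main obstacle is the single-strip concentration estimate: a single-level conditional Chernoff bound for a sum of independent hypergeometrics is routine, but telescoping the exponential moments $\EE[\exp(\lambda X_n(S))\mid\mathcal{F}_{n-1}]$ across all $n$ generations has to be done carefully so that the exponential decay rate survives. I would manage this by expressing $\EE[\exp(\lambda X_n(S))\mid\mathcal{F}_{n-1}]$ as a product over cubes in $E_{n-1}$, bounding it in terms of an exponential moment of $X_{n-1}(S)$ with a modified parameter, and optimising the parameter at each stage to preserve the polynomial gap $s-t-2\epsilon>0$ between mean and target. Once the single-strip tail is in hand, the rest of the argument (discretization, union bound, Borel--Cantelli) is routine.
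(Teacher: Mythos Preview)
Your proposal is correct and follows essentially the same strategy as the paper: a conditional Chernoff/exponential-moment bound telescoped across generations (the paper's Lemmas~4.1--4.2), a polynomial-size discretization of $A(d,d-k)$ (the paper's Corollary~3.2), then a union bound plus Borel--Cantelli (Corollaries~4.3--4.4). The only differences are cosmetic---the paper runs the concentration for the quantity $Y_n^W=(P_nr_n^d)^{-1}\mathcal{H}^{d-k}(W\cap E_n)$ on affine planes $W$ rather than for your cube count $X_n(S)$ on strips (converting at the end via Lemma~3.3), and it explicitly passes to a countable sequence $t_k\nearrow s$ at the very end to obtain the ``for all $t<s$'' conclusion simultaneously on a single full-probability event, a routine step you should add.
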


Lemma \ref{main lemma} will be proved in Section \ref{probability part}. Next we  prove Theorem \ref{projection of sets} and Theorem \ref{answer} assuming that Lemma \ref{main lemma} holds.

\begin{proof}[Proof of Theorem \ref{projection of sets}] Clearly $\dimH \pi_V(E)\le \dimH(E)\le s$ for all $V \in G(d,k)$, so it remains to verify the lower bound.

Using Lemma \ref{main lemma} we see that, almost surely, the estimate 
\[
\frac{(\pi_V)_*\mu(B(x,r))}{(2r)^{t}}=\frac{\mu \left(E\cap S'\right)}{(2r)^{t}}  \leq\sup_{S \in S(d, d-k)} \frac{\mu \left(E\cap S\right)}{w\left(S\right)^{t}}<\infty
\]
holds for all $V \in G(d,k)$, $x \in V$ and $r$ and simultaneously for all $t < s$, where $(\pi_V)_*\mu$ is the image measure of $\mu$ under the orthogonal projection of $\pi_V$ and $S'$ is the strip with width $2r$  induced by orthogonal complement of $V$ at the point $x$. Thus with  full probability $\dimH \pi_V(E)\geq t$ holds for all $V \in G(d,k)$ (See e.g. \cite[Chapter 4]{fa}. )
Approaching $s$ along a sequence gives, almost surely for
all $V \in G(d,k)$, the lower bound $\dim_H E\geq s$.
\end{proof}

We prove Theorem \ref{answer} by choosing  $M_n=2$ and $N_n= 2^{d-1}$ for all $n \in \NN$ in the above random construction.

\begin{proof}[Proof of Theorem \ref{answer}]
Let $M_n=2$ and  $N_n= 2^{d-1}$ for all $n \in \NN$. Then for every $E \in E(2,2^{d-1})$ and for the natural measure  $\mu$ on $E$, we have that 
\begin{equation}\label{compare}
\mu(B(x,r)) \asymp r^{d-1} 
\end{equation}
for $x \in E$ and $0<r<1$ where the symbol $\asymp$ means that the ratio of both sides is bounded above and below by positive and finite constants which does not depend on $x$ and $r$.
Thus we have that
$\mu \asymp \mathcal{H}^{d-1}|_E$ (See e.g.\cite[Chapter 6]{ma}), and so we can replace $\mu$ by $\mathcal{H}^{d-1}$ in \eqref{main equation}. It implies that almost 
surely for any $t <d-1$, we have
\begin{equation}\label{lastt}
\sup_{S \in S(d, d-k)} \frac{\mathcal{H}^{d-1} \left(E\cap S\right)}{w\left(S\right)^{t}} <\infty.
\end{equation}
Since $\mu$ is a probability measure, it follows  that $0<\mathcal{H}^{d-1}(E)<\infty$. 
By \eqref {compare} all the sets $E(2, 2^{d-1})$ are $(d-1)$-Alhfors regular. Thus we complete the proof.
\end{proof} 

Now we present two concrete examples of random Cantor sets on $\RR^2$ that fit into our general frame work. 

\begin{example}\label{example}
Consider the unit cube $[0,1]^2$. Let $N_n=M_n =2$ for all $n\in \NN$. 
Let $Q_1 = [0,1/2]\times [0,1/2], Q_2 = [1/2,1]\times [0,1/2], Q_3 = [1/2,1]\times [1/2,1]$ and $Q_4= [0,1/2]\times[1/2,1]$. Let  $L = \{Q_1, Q_4\}, R = \{Q_2, Q_3\}, \widetilde{L} = \{Q_1, Q_3\}$, and $\widetilde{D} = \{Q_2, Q_4\}$ corresponding to 'left', 'right', 'bottom left and top right', and 'bottom right and top left' subcubes of the unit cube.  

Let $E_1 = L$ or $R$ 
with the same probability $1/2$. Note that then every subcube has the same probability $1/2$ of being chosen. Given 
$E_n$, a random collection of $2^n$ interior disjoint $2^n$-adic closed subcubes of $[0,1]^2$, independently inside each of these cubes we choose the 'left' or 'right' column of the subcubes in the same way as $E_1 \subset [0,1]^2$. Let $E_{n+1}$
be the union of the chosen cubes. In the end we have the limit set (for an example see Figure \ref{fig3})
\[
E= \bigcap^{\infty}_{n=1} E_n.
\]

%It's not hard to see that almost surely the random Cantor sets $E$ constructed in the above has the property in Theorem \ref{answer}.

\begin{figure}
\centering 
\includegraphics[width=0.8\textwidth]{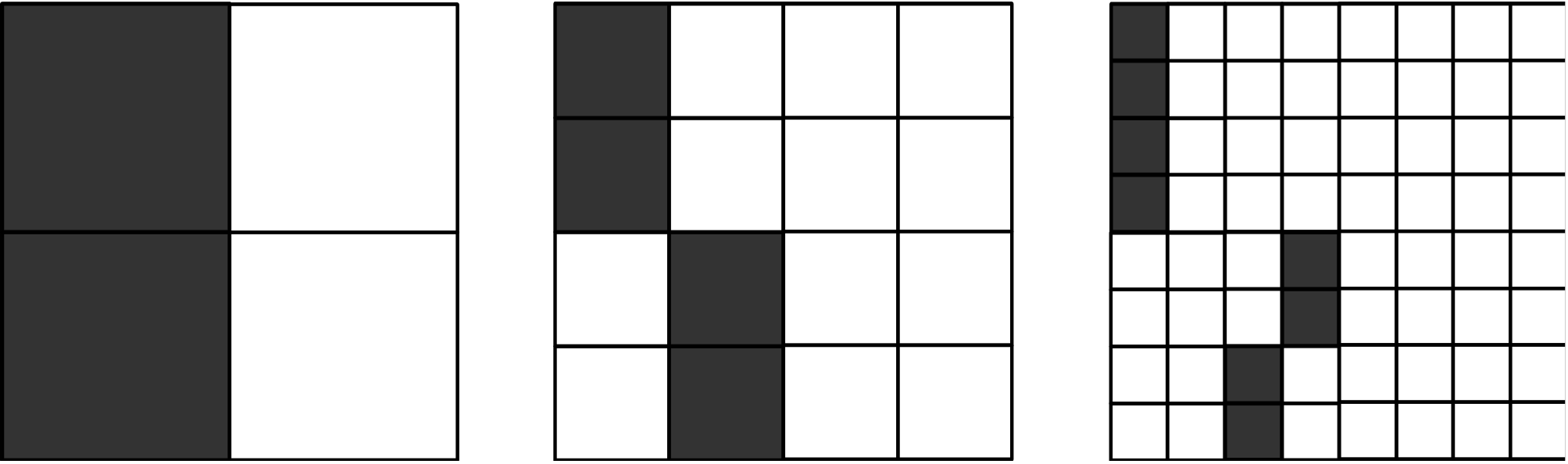}
\caption{The first three steps in the construction of E}
\label{fig3}
\end{figure}

\begin{figure}
\centering 
\includegraphics[width=0.8\textwidth]{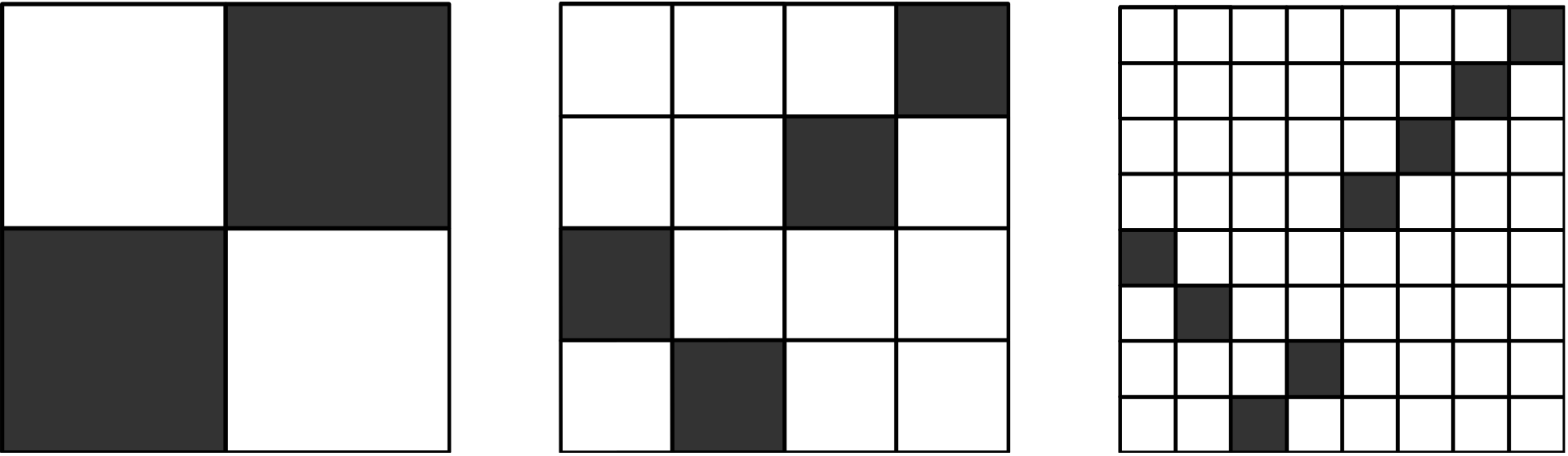}
\caption{The first three steps in the construction of F}
\label{fig4}
\end{figure}

If, on the other hand, we define another random process by changing $L$ and $R$ in the above construction to $\widetilde{L}$ and $\widetilde{Q}$, we end up with another random set, denoted by $F(2,2)$. For an example see Figure \ref{fig4}. 

Note that the construction of both random sets $E(2,2)$ and $F(2,2)$ are special cases of our random Cantor sets model which we described at the beginning of this section. Note that both of these constructions give rise to random sets as used in the proof of Theorem \ref{answer}.

In the end we are going to show that $E(2,2)$ and $F(2,2)$ are ''different''. Indeed for every element  $E$ of $E(2,2)$, we have $\pi_{y}(E)= \{0\} \times [0,1]$  and $ \mathcal{H}^{1}(\pi_{x}(E)) \leq 1/2$, where $\pi_x, \pi_y$ are projections onto $x$-axis, $y$-axis respectively. But  $\pi_{x}(F)= [0,1]\times\{0\}$ and  $\pi_{y}(F)= \{0\} \times [0,1]$ for all $F \in F(2,2)$.
\end{example}

\section{geometric part} \label{geometric part}

In this section, we present some geometric lemmas. The following results are  adapted  from \cite{ss}
to our setting. In \cite{ss}, Corollary \ref{use} is proved for lines. Here we give the detailed proof for  general affine subspaces of any dimension.

We are going to define the angle between a plane $W \in A(d,m)$ and a hyperplane $H \in A(d,d-1)$. We assume $W \in G(d,m)$ and $H \in G(d,d-1)$ first. We say that they have zero angle if $W \subset H$. Otherwise we have $H+W= \RR^d$ where
\begin{equation}
H+W:= \{h+w: h\in H, w \in W\}.
\end{equation}
Applying the basic dimension formula in linear algebra for $H$ and $W$, we have that $\dim(H\cap W)= m-1$. Thus for any $x \in H \cap W$, there is unique affine line $\ell_x \subset W, x \in \ell_x$, $\ell_x \perp( H\cap W).$  We choose an affine unit vector $e(x) \in \ell_x$ such that the root of  $e(x)$ is $x$.  Let 
\[
\theta(H,W):=\theta(H, \ell_x) 
\]
for some $x \in H\cap W$ (there is only one point in $H\cap W$ when $m=1$), where $\theta(H, \ell )$ is  the angle between the line $\ell$ and the plane $H$ defined in the usual manner.  Since $\ell_x$ and $\ell_y$ are parallel for any $x, y \in H \cap W$, the angle $\theta(H,W)$ doesn't depend on the choice of $x$. For the case that $W \in A(d,m)$ and $H \in A(d,d-1)$, there are unique subspaces  $W'\in G(d,m)$ and $H' \in G(d,d-1)$  parallel to $W$ and $H$, respectively. We define $\theta(H,W):=\theta(H', W')$.

Let $H_i= \{x \in \RR^d: x_i=0\}$ for $ 1 \leq i\leq d$. Define
\[
\Gamma_n(d,m) =\{W \in A(d,m): \min_{1\leq i \leq d}\theta(W,H_i) \geq r_n^d \}
\] and $\Gamma(d,m)= \cup_{n\in \mathbb{N}} \Gamma_n(d,m)$. In the following we  use $C(d)$ to represent  constants which  don't depend on $n$. We use $\# J$ to denote the cardinality of a set $J$. 

\begin{lemma}\label{geometric}
For any $n\in \NN$, there is $\Gamma'_n(d,m)\subset \Gamma(d,m)$ such that for any $W \in \Gamma_n(d,m)$, there exists $W' \in \Gamma'_n(d,m)$ with
\[
\mathcal{H}^m(W\cap Q)\leq \mathcal{H}^m( W' \cap Q ) + C(d)r_n^{d+m}
\]
for all $Q \in D_n$. Furthermore $\# \Gamma_n'(d,m) < r_n^{-C(d)}$. 
\end{lemma}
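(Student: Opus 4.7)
The plan is to build $\Gamma'_n(d,m)$ as a fine finite net of affine $m$-planes, chosen so that the quantity $\mathcal{H}^m(W\cap Q)$ is approximately preserved under the replacement of $W$ by its nearest neighbour in the net, uniformly in $Q\in D_n$. The key ingredient is the angle condition $\theta(W,H_i)\ge r_n^d$ in the definition of $\Gamma_n(d,m)$: it ensures that $W$ cannot be almost parallel to any coordinate hyperplane, which is precisely the situation in which $\mathcal{H}^m(W\cap Q)$ would be unstable under small perturbations of $W$.

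Concretely, I would parametrise an affine $m$-plane $W\in A(d,m)$ by a pair $(V,p)$ with $V\in G(d,m)$ and $p\in V^\perp\cap \overline{B(0,\sqrt{d})}$, and choose a common resolution $\delta=r_n^{2d+1}$ for both coordinates. Take a maximal $\delta$-separated subset of $G(d,m)$ of cardinality at most $\delta^{-m(d-m)}$, restrict it to those $V'$ with $\min_i\theta(V',H_i)\ge r_n^d/2$, and for each such direction attach a $\delta$-net of translations in $(V')^\perp\cap \overline{B(0,\sqrt{d})}$ of cardinality at most $\delta^{-(d-m)}$; this produces a set $\Gamma'_n(d,m)\subset \Gamma(d,m)$ of cardinality at most $\delta^{-(m+1)(d-m)}\le r_n^{-C(d)}$. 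For $W=V+p\in\Gamma_n(d,m)$, pick $W'=V'+p'\in\Gamma'_n(d,m)$ with $V$ and $V'$ at Grassmannian distance $\le\delta$ and $|p-p'|\le\delta$; since points of $W\cap [0,1]^d$ lie at distance at most $2\sqrt{d}$ from $p$, any such point admits a point of $W'$ within distance $C(d)\delta$.

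The geometric content reduces to showing that $|\mathcal{H}^m(W\cap Q)-\mathcal{H}^m(W'\cap Q)|\le C(d)r_n^{d+m}$ for each $Q\in D_n$, which in particular yields the one-sided bound required by the lemma. The intersection $W\cap Q$ is a convex $m$-polytope whose at most $2d$ facets lie on coordinate-hyperplane faces of $Q$, with total $(m-1)$-measure at most $C(d)r_n^{m-1}$. An elementary trigonometric argument using the $C(d)\delta$ closeness of $W$ and $W'$ and the angle lower bound $\sin\theta(V,H_i)\ge c\,r_n^d$ shows that, in local graph parametrisations, the corresponding facets of $W\cap Q$ and $W'\cap Q$ differ in position along $W$ by at most $C(d)\delta/r_n^d$. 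Hence the symmetric difference $(W\cap Q)\triangle(W'\cap Q)$ is contained in a tubular neighbourhood of width $C(d)\delta/r_n^d$ around the boundary $\partial(W\cap Q)$, so its $\mathcal{H}^m$-measure is at most $C(d)r_n^{m-1}\cdot\delta/r_n^d=C(d)r_n^{d+m}$ with $\delta=r_n^{2d+1}$, as claimed.

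The hardest step is the uniform facet-displacement estimate. It must account simultaneously for the rotation $V\to V'$ (whose effect on points of $[0,1]^d$ is of order $\delta$, not $\delta\cdot r_n$, since the ambient diameter is $\sqrt{d}$) and for the translation $p\to p'$ of size $\delta$, and it must convert the resulting normal displacement off each face of $Q$ into a tangential displacement along $W$, for which the factor $1/\sin\theta(V,H_i)\le 2r_n^{-d}$ is invoked.
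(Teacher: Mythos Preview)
Your proposal is correct and follows essentially the same approach as the paper: build an $r_n^{2d+1}$-net of affine $m$-planes, then use the angle condition $\theta(W,H_i)\ge r_n^d$ to show that the portion of $W\cap Q$ affected by passing to the nearby plane $W'$ lies in a slab of thickness $O(r_n^{2d+1}/r_n^d)$ along $\partial(W\cap Q)$, contributing $O(r_n^{m-1}\cdot r_n^{d+1})=O(r_n^{d+m})$. The paper packages this via the projection metric $\rho(V,W)=\sup_{x\in[0,1]^d}|\pi_V(x)-\pi_W(x)|$ and an explicit interior/boundary decomposition $W\cap Q=I(Q,W,2\epsilon)\cup B(Q,W,2\epsilon)$ (yielding only the one-sided inequality actually needed), whereas you parametrise by $G(d,m)\times V^\perp$ and phrase the comparison as a facet-displacement/symmetric-difference estimate; but the geometric content, the scales, and the final bound are identical.
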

\begin{proof}
Define a metric among $\Gamma(d, m)$ by setting
\[
\rho (V,W) =\sup_{x\in [0,1]^{d}} | \pi_V(x)-\pi_W(x)|. 
\]

Let $\alpha=r_n^d$, and $\epsilon=r_n^{2d+1}$.  Let $\Gamma'_n(d,m)$ be an $\epsilon$-dense subset of $\Gamma(d,m)$ in the $\rho$-metric. There is such an $\Gamma'_n(d,m)$ with $\# \Gamma'_n(d,m) < \epsilon^{-C(d)}$.

Let $W \in \Gamma'_n(d,m)$, then we choose $W' \in A^1$ such that $\rho(W,W') \leq \epsilon$.
For any $r_n$-adic cube $Q$ of $D_n$ with $Q \cap W \neq \emptyset$, denote by $B(Q,W,\epsilon) = \{x\in Q\cap W: \dist(x, \partial Q) \leq \epsilon \}$ the  ''boundary part'' of $Q\cap W$ and by $I(Q, W, \epsilon) = (Q \cap W) \backslash B(Q,W,\epsilon)$ the ''interior part'' of  $Q\cap W$, see Figure \ref{fig1}.  We have that 
\begin{equation}\label{de}
Q \cap W = I(Q,W,2\epsilon) \cup B(Q,W,2\epsilon).
\end{equation}

\begin{figure}
\centering 
\includegraphics[width=0.6\textwidth]{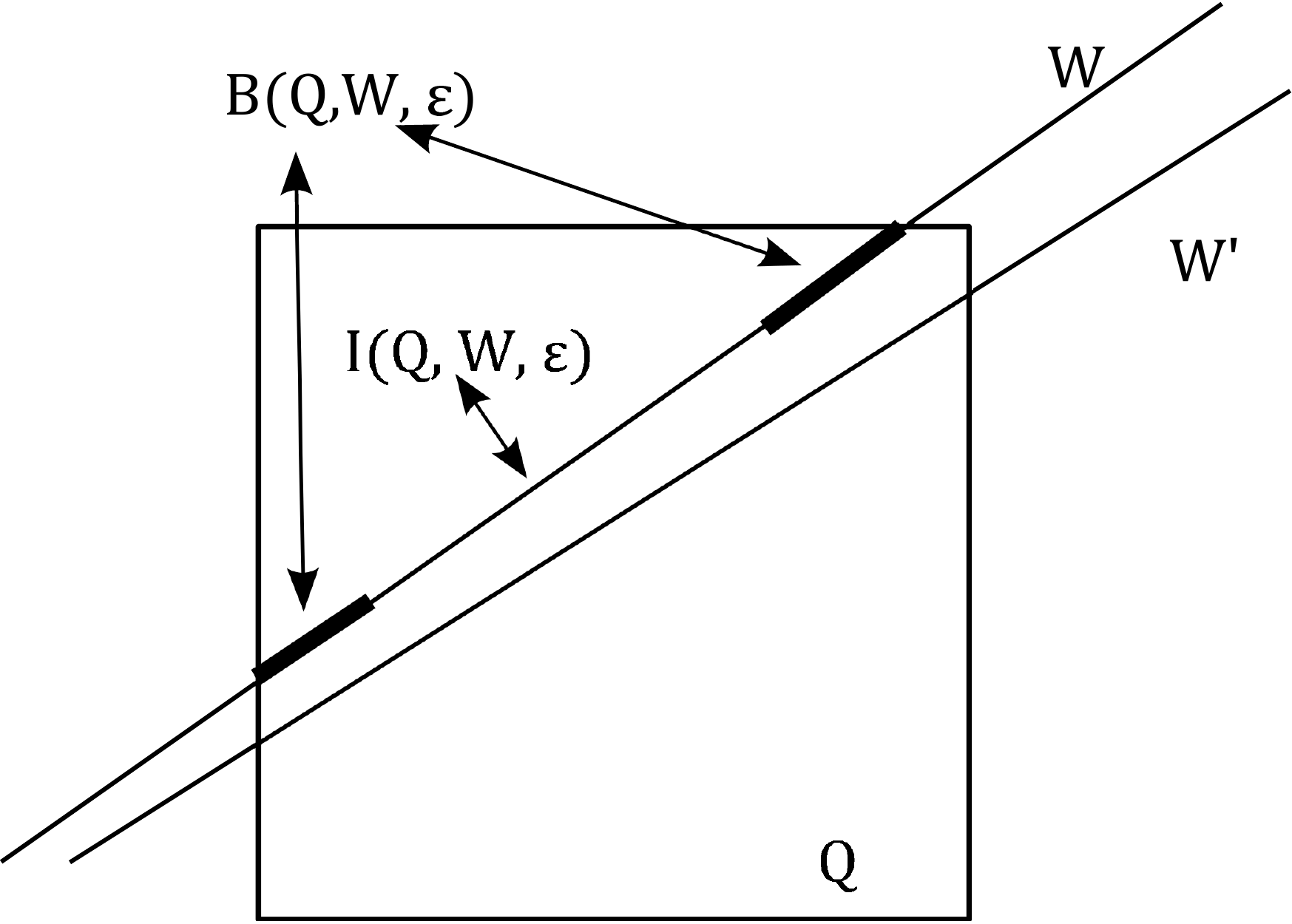}
\caption{Case one when $m=1$}
\label{fig1}
\end{figure}

Now we are going to show that $I(Q,W,2\epsilon) \subset \pi_W(W'\cap Q)$. For every $x \in I(Q,W,2\varepsilon)$ there is a unique $y \in W'$ such that $\pi_W (y)=x$. Since 
\[
\dist(x,y) = \dist(\pi_W(y), \pi_{W'}(y))\leq \varepsilon\]
 and $\dist(x, \partial(Q)) > 2\varepsilon$, we have $y \in B(x, \frac{3\varepsilon}{2} )\subset Q$. It follows that $I(Q,W,2\varepsilon)\subset \pi_{W}(W' \cap Q)$ and then 
\begin{equation}\label{inter}
\mathcal{H}^m (I(Q,W,2\epsilon)) \leq \mathcal{H}^m(Q \cap W'). 
\end{equation}

Now are going to show that $\mathcal{H}^m(B(Q,W, 2\epsilon ) \leq C(d)r_n^{d+m}$. For any $x \in W \cap \partial Q$, there exists at least one face of $Q$ which contains $x$. Then choose any such face and denote it by $F(x)$. Let $H(x)$ be the hyperplane which contains $F(x)$. Then there is a local  orthogonal basis at $x$, $\{e_1(x), e_2(x),...,e_m(x) \}$ of $W$, such that $e_m(x) \perp (W\cap H(x))$ and we denote by $(x_1, x_2, \cdots, x_m)$ the co-ordinates of $x$ with respect to this  basis. Then
\[
B(Q,W,2\epsilon) \subset \{ |x_m| \leq \frac{2\epsilon}{\sin\alpha} \text{ in the above local coordinates}\}.
\]
Note that once the face is fixed, $|x_m|$ does not depend on the choice of these local coordinates. Thus 
\[ 
\mathcal{H}^m(B(Q,W, 2\epsilon ) \leq (2\sqrt{m} r_n)^{m-1}\frac{2\epsilon}{\sin\alpha}. 
\]
There exists a constant $M\in \NN$, such that for $n\geq M$ imply  $\sin(r_n)>\frac{1}{2}r_n$. Thus we can choose a large constant $C(d)$ such that 
\begin{equation}\label{bound}
\mathcal{H}^m(B(Q,W, 2\epsilon ) \leq C(d)r_n^{d+m} 
\end{equation}
for all $n\in \NN.$ Applying the estimates \eqref{de}, \eqref{inter} and \eqref{bound}, we have 
\[
\mathcal{H}^m(W\cap Q) \leq \mathcal{H}^m( W'\cap Q) +C(d)r_n^{d+m}.
\] 
Thus we complete the proof.
\end{proof}

Let $m=d-k$ in Lemma \ref{geometric} and recall that the number of $r_n^{-1}$ -adic subcubes of $E_n$ is at most $r_n^{-d}$. Let $ \Gamma_n:=\Gamma_n(d,d-k)$  and $ \Gamma:=\Gamma(d,d-k)$. Let $A, B$ be two subset of  $\RR^d$. Define
\[
|A\cap B|:= \mathcal{H}^{d-k}(A\cap B).\] We have the following easy corollary. 

\begin{corollary}\label{use}
For any $n\in \NN$, there is  $\Gamma'_n \subset \Gamma$ such that for  any $W\in \Gamma_n$, there exists $W' \in \Gamma'_n$ with  
\[
|W\cap E_n| \leq |W' \cap E_n|+ C(d)r_n^{d-k}
\]
for any realization $E_n$. Further more $\# \Gamma_n'\leq r_n^{-C(d)}.$
\end{corollary}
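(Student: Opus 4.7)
The plan is to derive the corollary from Lemma \ref{geometric} by choosing $m = d-k$ and summing the per-cube estimate over the $r_n$-adic subcubes that make up $E_n$. I would take $\Gamma'_n := \Gamma'_n(d, d-k)$ to be exactly the set produced by Lemma \ref{geometric}; the cardinality bound $\#\Gamma'_n \leq r_n^{-C(d)}$ is then inherited directly. The crucial feature already established there is that, given any $W \in \Gamma_n$, a \emph{single} nearby $W' \in \Gamma'_n$ can be chosen for which
\[
\mathcal{H}^{d-k}(W \cap Q) \leq \mathcal{H}^{d-k}(W' \cap Q) + C(d)\, r_n^{2d-k}
\]
holds simultaneously for every $Q \in D_n$.

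With this uniform approximating plane in hand, I would fix an arbitrary realization $E_n$, which by construction is a union of $P_n$ interior-disjoint $r_n$-adic cubes from $D_n$. Summing the per-cube estimate over these $P_n$ cubes and applying subadditivity of $\mathcal{H}^{d-k}$ on the left (or equivalently noting that distinct cubes meet only along faces) yields
\[
|W \cap E_n| \leq \sum_{Q \subset E_n} \mathcal{H}^{d-k}(W \cap Q) \leq |W' \cap E_n| + P_n \cdot C(d)\, r_n^{2d-k}.
\]
Since $E_n \subset [0,1]^d$ consists of cubes of side length $r_n$, the total count is bounded by $P_n \leq r_n^{-d}$, so the error satisfies $P_n \cdot C(d)\, r_n^{2d-k} \leq C(d)\, r_n^{d-k}$, which is exactly the desired bound.

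I do not anticipate any real obstacle here: the geometric substance is entirely concentrated in Lemma \ref{geometric}, and the corollary is essentially a bookkeeping step. The one feature that makes the argument go through cleanly is that Lemma \ref{geometric} supplies a single $W'$ valid for every $Q \in D_n$ rather than allowing $W'$ to depend on $Q$; this uniformity is precisely what legitimises the summation and prevents the cardinality of $\Gamma'_n$ from blowing up when passing from one cube to the whole of $E_n$.
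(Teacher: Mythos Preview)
Your proposal is correct and follows essentially the same route as the paper: set $m=d-k$ in Lemma \ref{geometric}, use the single approximating plane $W'$ it provides, sum the per-cube estimate over the at most $r_n^{-d}$ cubes comprising $E_n$, and absorb the accumulated error $C(d)r_n^{2d-k}\cdot r_n^{-d}=C(d)r_n^{d-k}$. Your emphasis on the uniformity of $W'$ across all $Q\in D_n$ is exactly the point that makes the summation legitimate.
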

\begin{proof}
By Lemma \ref{geometric}, we have that for any $W\in \Gamma_n$, there exist $W' \in \Gamma'_n$ such that 
\begin{equation}\label{cube}
|W\cap Q| \leq |W'\cap Q|+C(d)r_n^{2d-k}
\end{equation}
for each $Q \in D_n$ and    $ \# \Gamma'_n< r_n^{-C(d)}$. 
For any realization $E_n$, we sum the two sides of \eqref{cube} over 
$Q\in D_n$ such that $Q \subset E_n$: 
\begin{equation}
\sum_{Q \subset E_n}|W \cap Q | \leq  \sum_{Q \subset E_n}|W' \cap Q |+C(d)r_n^{d-k}.
\end{equation}
By the definition of $|W\cap E_n|$, we arrive at the required estimate. 
\end{proof}

For a strip $S\in S (d, d-k)$, denote
\[
Z(S,n)=\# \{Q ~~is~~ an ~~r_n~~adic~~cube \mid Q\cap E_n\cap S\neq \emptyset\}.
\]
For  later use in Corollary \ref{limsup2}, we state the following easy fact as a Lemma. 

\begin{lemma}\label{length and area}
If $ |W\cap E_n|\leq h$ for all  $W \in \Gamma_n$, then for any strip $S \in S(d,d-k)$ with width $0<w(S)\leq r_n$, we have
\[
Z(S,n) \leq C(d) r_n^{k-d} h.
\]
\end{lemma}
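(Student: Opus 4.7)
The plan is to reduce the counting estimate to the intersection-measure hypothesis by covering the strip $S$ with a bounded number of parallel translates of its central plane, then pigeonholing. Let $W \in A(d,d-k)$ define $S$; I focus on the main case $W \in \Gamma_n$, noting that when $W \notin \Gamma_n$ the plane is essentially parallel to a coordinate hyperplane and the conclusion reduces to the generic case via a small rotation that perturbs $W$ into $\Gamma_n$ at the cost of enlarging the strip by $O(r_n^{d})$, which is absorbed since $r_n^d \ll r_n$. Translate so $W$ passes through the origin and set $V := W^\perp$, a $k$-dimensional subspace. Since $w(S) \le r_n$ and each $r_n$-adic cube has diameter $\sqrt d\, r_n$, every cube $Q$ meeting $S$ has its centre $c_Q$ within Euclidean distance $\sqrt d\, r_n$ of $W$; hence $\pi_V(c_Q)$ lies in a fixed $k$-dimensional ball $B_V$ of radius $O(r_n)$.

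Fix a small $\delta = \delta(d) > 0$ and a $\delta r_n$-net $\{v_1, \ldots, v_N\} \subset B_V$ with $N \le C(d)$. Each translate $W_i := W + v_i$ is parallel to $W$, so it makes the same angles as $W$ with every coordinate hyperplane; thus either $W_i \in \Gamma_n$, in which case the hypothesis gives $|W_i \cap E_n| \le h$, or $W_i$ misses $[0,1]^d$ entirely, in which case $|W_i \cap E_n| = 0$.

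The core geometric step is a slice estimate: for every $r_n$-adic cube $Q \subset E_n$ with $Q \cap S \neq \emptyset$, some index $i = i(Q)$ satisfies $|W_i \cap Q| \ge c(d) r_n^{d-k}$. Pick $v_i$ with $|v_i - \pi_V(c_Q)| \le \delta r_n$ and set $q' := c_Q + (v_i - \pi_V(c_Q))$. Then $\pi_V(q') = v_i$, so $q' \in W_i$, and $|q' - c_Q| \le \delta r_n$. For $\delta$ small enough (independent of the orientation of $W$) the Euclidean ball $B(q', r_n/4)$ lies inside $Q$, and its intersection with the affine $(d-k)$-plane $W_i$ through its centre $q'$ is a $(d-k)$-ball of radius $r_n/4$, which yields the claimed lower bound.

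Pigeonhole finishes the proof: some index $i^\ast$ is assigned to at least $Z(S,n)/N$ of the cubes, whose interiors are disjoint, so
\[
h \;\ge\; |W_{i^\ast} \cap E_n| \;\ge\; c(d)\, r_n^{d-k} \cdot Z(S,n)/N,
\]
and rearranging gives $Z(S,n) \le C(d)\, r_n^{k-d}\, h$. The main difficulty is the slice lower bound: one must ensure that the shifted parallel plane actually cuts through the interior of $Q$ rather than grazing its boundary, which is arranged by choosing $\delta$ small relative to $1/\sqrt d$. Crucially this step is independent of the tilt of $W$, so the angle condition built into $\Gamma_n$ enters only through the transfer of the measure bound from $W$ to its parallel translates, not through the geometric slice estimate itself.
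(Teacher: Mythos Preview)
Your argument is correct and takes a somewhat different, more combinatorial route than the paper. The paper bounds $Z(S,n)$ via a volume--Fubini argument: it observes that every $r_n$-cube meeting $S$ lies in the $t_n$-neighbourhood $V(t_n)$ of the central plane $V$ (with $t_n = r_n/2 + \sqrt{d}\,r_n$), so $Z(S,n)\, r_n^{d} \le \mathcal{H}^{d}(E_n \cap V(t_n))$; then Fubini across $V^{\perp}$ together with the hypothesis gives $\mathcal{H}^{d}(E_n \cap V(t_n)) \le C\, t_n^{k}\, h$, and one divides. Your proof is the discretized counterpart: instead of integrating over the continuum of parallel translates you select a $\delta r_n$-net of $C(d)$ of them, establish a uniform slice lower bound $|W_i \cap Q| \ge c(d)\, r_n^{d-k}$ for at least one translate per cube, and finish by pigeonhole. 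Both arguments handle $W \notin \Gamma_n$ in the same way, by a small rotation into $\Gamma_n$ at cost $O(r_n^{d})$ in the strip width. The paper's Fubini route is shorter and sidesteps the need for the pointwise slice estimate, while your approach is more elementary and makes the role of the individual parallel translates explicit; the two are essentially continuous and discrete implementations of the same averaging idea.
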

\begin{proof}
We assume  $S(V) \in S(d,d-k)$ with  $V \in\Gamma_n$ first. Let $V^{\perp} \in G(d,k)$ be the orthogonal complement of $V$ and $z:= V^{\perp} \cap V$. Let $B_{V^{\perp}}( y,r)$ be the ball of $V^{\perp}$ with center y and radius $r$. Let $t_n:= r_n /2 + \sqrt{d}r_n$. Since 
\[
\{Q \text{ is an $r_n$ adic cube} : Q \cap E_n \cap S \neq \emptyset \} \subset V(t_n)
\] 
where $V(t_n)$ is the $t_n$ neighborhood of $V$ in $\mathbb{R}^d$,  we have
\begin{equation}\label{l1}
Z(S,n) r_n^{d}\leq \mathcal{H}^{d}(E_n \cap V(t_n) ).
\end{equation} 
Using Fubini's theorem and the condition $|W\cap E_n|\leq h$  for all  $W \in \Gamma_n,$
we obtain that
\begin{equation}\label{l2}
\begin{aligned}
\mathcal{H}^{d}(E_n \cap V(t_n) ) = \int_{B_{V^{\perp}}(z, t_n )} \mathcal{H}^{d-k}  (E_n \cap P^{-1}_{V^{\perp}}(x)) d\mathcal{H}^{k}(x) \leq  (t_n) ^{k}h.
\end{aligned}
\end{equation}
By \eqref{l1} and \eqref{l2} we have $Z(S,n)r_n^d \leq (t_n)^k h$. Let $C_1(d):=(1+2\sqrt{d})^k$. We get that 
$Z(S,n) \leq C_1(d)r_n^{k-d} h$. 

For a strip $S(V)$ with $V \in \Gamma_n ^{c}$ ( $\Gamma_n ^{c}$ is the complement of $\Gamma_n$ that is  $A(d,d-k) \backslash \Gamma_n$) and $w(S)\leq r_n$, there is a strip 
\[\widetilde{S}=\{x\in \RR^d: \dist(x, \widetilde{V}) \leq 5 r_n\}\]
 and $S \subset  \widetilde{S}$. Thus
\[Z(S,n)\leq Z(\widetilde{S},n)\leq  10 C_1(d)r_n^{k-d}h.\] 
Let $C(d)=10C_1$. Thus  the proof is completed.
\end{proof}

Note that the constant $C(d)$ may be different in different places of this section. For the convenience in what follows we fix a constant $C(d)$ such that the statements of all the Lemmas and Corollaries hold with this constant.

\section{probabilistic part}\label{probability part}

We use a similar method as in \cite{ckls} to estimate the intersections of our sets with affine planes. The random Cantor sets studied in \cite{ckls} are different from the ones considered here. We choose interior disjoint closed subcubes at every step of our constructions, while in \cite{ckls} overlaps are allowed. Since we assume that $M_n$ are uniformly bounded, the proof here will be simpler than that of \cite{ckls}. On the other hand, we give here the detailed proof for general $d$ and $m$ while in the main part of \cite{ckls}, it is assumed that $d=2, m=1.$

We fix a number $t<s\leq k$ and let $0<5\epsilon\leq s-t$. Recall that by \eqref{dimension}, there exists $n_0 \in \NN$ such that
\begin{equation}\label{dim}
r_m^{-t-4\varepsilon}\leq r_m^{-s+\varepsilon} \leq P_{m}\leq r_m^{-s-\varepsilon}
\end{equation}
holds for all $m \geq n_0, m \in \NN$. For this $n_0$, there is a constant  $R_0$ such that 
\begin{equation}\label{choice}
|W\cap E_{n_0}|\leq R_0P_{n_0} r_{n_0}^{t+d-k}, 
\end{equation}
holds for all $W \in A(d,d-k)$ and any realization $E_{n_0}$. Let  $W \in \Gamma$, $n\in\mathbb N$. Define 
\[
Y_n^{W}= (P_nr_n^d)^{-1}| W\cap E_n |. 
\]

Denote by  $\PP(\cdot \big| A)$
 the conditional probability conditioned on the event $A$.

%In order to prove Lemma \ref{main lemma} we will show that
%\[
%\PP(\cup_{k=n_0} \cap_{n=k} G_n)=1.
%\]
%Note  that for  $\omega \in \cap _{n\geq n_0}G_n$ we have
%\[
%\sup_{S\in S(d,d-k)} \frac{\mu^{\omega}(E^{\omega} \cap S) }{w(S)^t}\leq 2C(d)R_0M^t.
%\]  
%This will be shown at the end of the section. 

\begin{lemma} \label{conditionalprobabilityestimate}
Let  $n > n_0, n \in \NN$ and  $W\in \Gamma$. Then for any positive $\lambda$ and $\lambda_0$ with $\lambda (2\sqrt{d} r_{n-1})^{d-k} (P_nr_n^d)^{-1}\leq \lambda_0\leq 1$, we have
\begin{equation}
\EE\left(e^{\lambda Y_n^{W}} \Big| E_{n-1}\right)\leq e^{(1+\lambda_0)\lambda Y^{W}_{n-1}}.
\end{equation}
\end{lemma}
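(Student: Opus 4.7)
The plan is to exploit the conditional independence built into the construction at level $n$: given $E_{n-1}$, the choices of subcubes inside distinct level-$(n-1)$ cubes are independent. Writing $E_{n-1}=\bigcup_{i=1}^{P_{n-1}}Q_i$ with $Q_i\in D_{n-1}$, and setting $X_i:=|W\cap E_n\cap Q_i|$, one has $|W\cap E_n|=\sum_{i=1}^{P_{n-1}}X_i$ with the $X_i$ conditionally independent given $E_{n-1}$. Consequently
\[
\EE\left(e^{\lambda Y_n^{W}}\bigm| E_{n-1}\right)=\prod_{i=1}^{P_{n-1}}\EE\left(\exp\bigl(\lambda(P_nr_n^d)^{-1}X_i\bigr)\bigm| E_{n-1}\right),
\]
and it is enough to bound each factor in terms of $|W\cap Q_i|$.

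For the single-cube estimate I would first note that $W\cap Q_i$ is a convex subset of a $(d-k)$-dimensional affine plane of diameter at most $\sqrt{d}\,r_{n-1}$, so $X_i\le|W\cap Q_i|\le (2\sqrt{d}\,r_{n-1})^{d-k}$. The standing hypothesis $\lambda(2\sqrt{d}\,r_{n-1})^{d-k}(P_nr_n^d)^{-1}\le\lambda_0\le 1$ then forces $0\le\lambda(P_nr_n^d)^{-1}X_i\le\lambda_0\le 1$, which is exactly the range where the elementary inequality
\[
e^x\le 1+(1+\lambda_0)x\qquad(0\le x\le\lambda_0\le 1)
\]
applies (it reduces to $e^{\lambda_0}\le 1+\lambda_0+\lambda_0^2$, easy for $\lambda_0\le 1$). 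Taking conditional expectations and using that each $r_n$-subcube of $Q_i$ has marginal probability $N_n/M_n^d$ of being retained, one obtains
\[
\EE\left(\exp\bigl(\lambda(P_nr_n^d)^{-1}X_i\bigr)\bigm| E_{n-1}\right)\le 1+(1+\lambda_0)\lambda(P_nr_n^d)^{-1}(N_n/M_n^d)|W\cap Q_i|.
\]

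To finish, I would apply $1+y\le e^y$, multiply the bounds over $i$ using conditional independence, and use $\sum_i|W\cap Q_i|=|W\cap E_{n-1}|$ to arrive at
\[
\EE\left(e^{\lambda Y_n^{W}}\bigm| E_{n-1}\right)\le\exp\!\left((1+\lambda_0)\lambda(P_nr_n^d)^{-1}(N_n/M_n^d)|W\cap E_{n-1}|\right).
\]
The identity $P_nr_n^d=P_{n-1}r_{n-1}^d\cdot(N_n/M_n^d)$ then collapses the prefactor to $(1+\lambda_0)\lambda(P_{n-1}r_{n-1}^d)^{-1}|W\cap E_{n-1}|=(1+\lambda_0)\lambda Y_{n-1}^{W}$, which is the asserted bound. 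The whole argument is a standard Bernstein-type MGF estimate for a sum of bounded independent random variables with known means; the only non-probabilistic ingredient is the deterministic geometric bound $|W\cap Q_i|\le(2\sqrt{d}\,r_{n-1})^{d-k}$. The main point to handle carefully is the matching between this deterministic bound and the quantitative hypothesis on $\lambda_0$, after which the cancellation between $N_n/M_n^d$ and $P_nr_n^d/(P_{n-1}r_{n-1}^d)$ makes the exponent telescope cleanly into $Y_{n-1}^{W}$.
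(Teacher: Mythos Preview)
Your proof is correct and follows essentially the same route as the paper: decompose $Y_n^W$ over the level-$(n-1)$ cubes, use conditional independence to factor the MGF, apply the elementary bound $e^x\le 1+(1+\lambda_0)x$ on each factor using the deterministic diameter bound $|W\cap Q_i|\le(2\sqrt{d}\,r_{n-1})^{d-k}$, take expectations, and then use $1+y\le e^y$ together with the identity $(P_nr_n^d)^{-1}(N_n/M_n^d)=(P_{n-1}r_{n-1}^d)^{-1}$ to telescope into $Y_{n-1}^W$. The only cosmetic differences are that the paper normalizes $X_i$ by $(P_nr_n^d)^{-1}$ from the outset and states $\EE(X_i\mid E_{n-1})=(P_{n-1}r_{n-1}^d)^{-1}|W\cap Q_i|$ directly, whereas you carry the factor $N_n/M_n^d$ explicitly and cancel it at the end; the arguments are otherwise identical.
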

\begin{proof}
Let $Q_{1}, Q_{2}, \cdots , Q_{K}$  be the cubes in $E_{n-1}$ hitting $W$. For each  $1\leq i\leq K$, consider the random variable
\begin{equation}
\begin{aligned}
X_{i}= (P_nr_n^{d})^{-1} \mathcal{H}^{d-k} \left(W\cap E_{n} \cap Q_i  \right). 
\end{aligned}
\end{equation}
Thus we have $Y_n^{W}= \sum_{i=1}^{K} X_{i}$.  For each $1\leq i\leq K$, we have that  
\begin{equation}\label{key1}
\EE( X_i \Big| E_{n-1} ) = (P_{n-1}r_{n-1}^{d})^{-1} |W\cap Q_i|.
\end{equation}
Conditional on $E_{n-1}$, recall that the cubes forming $E_{n}$ are chosen   independently inside each $Q_i, 1\leq i\leq K$.  Thus the random variables $X_{i}, 1\leq i \leq K$ are independent. And so $ e^{\lambda X_i}$ are also independent. This gives
\begin{equation}\label{independent}
\mathbb{E} \left( e^{\lambda Y_n^{W} }\Big| E_{n-1}\right)= \prod^K_{i=1}\mathbb{E} \left( e ^{\lambda X_{i}}\Big| E_{n-1}\right).
\end{equation}
For all $ \vert x \vert \leq \rho\leq 1$, we use the fact
$e^{x} \leq 1+(1+\rho)x$ and
\[
\lambda X_i\leq \lambda (2\sqrt{d} r_{n-1})^{d-k} (P_nr_n^d)^{-1} \leq \lambda_0\leq 1
\]
 for $1\leq i \leq K$, to obtain
\begin{equation}\label{11}
e^{\lambda X_i } \leq 1+(1+\lambda_0)\lambda X_i.
\end{equation}
Thus by \eqref{key1} and the trivial inequality $  1+x \leq e^x$, we have 
\begin{equation}\label{one estimate}
\begin{aligned}
\EE \left( e ^{\lambda X_{i}} \Big| E_{n-1}\right) \leq \exp \left(\left(1+\lambda_0 \right)\lambda (P_{n-1}r_{n-1}^{d})^{-1}| W\cap Q_i | \right).
\end{aligned}
\end{equation}
Combing this with \eqref{independent} and the definition of $Y^{W}_{n-1}$, we finish the proof. 
\end{proof}
%\begin{equation}\label{whole estimate}
%\begin{aligned}
%\mathbb{E} \left( e^{ \lambda Y_n^W }\Big| E_{n-1}\right) \leq \prod^K_{i=1}
%e^{(1+\lambda_0)\lambda \frac{\mathcal{H}^{d-k} (W\cap Q_i)}{P_{n-1}r_{n-1}^{d}}}\leq \exp \left((1+\lambda_0) \lambda Y^w_{n-1} \right).
%\end{aligned}
%\end{equation}
Let $R>2R_0 C(n_0)$ be a constant where 
\begin{equation}\label{eq:cc}
C(n_0):= \left(2\sqrt{d}M \right)^{d-k} \prod^{\infty}_
{i=n_0+1} \left(1+r_i^{\varepsilon}\right). 
\end{equation}
By applying Lemma \ref{conditionalprobabilityestimate} and the total expectation formula, we have the following estimate. 

\begin{lemma} \label{probabilityestimate}
For any  $n > n_0, n \in \NN$ and  $W\in \Gamma$, we have the bound
\begin{equation}
\PP\left(Y_n^W> Rr_{n}^{t-k}  \right)\leq \exp\left(-r_{n}^{-\epsilon}\right).
\end{equation}
\end{lemma}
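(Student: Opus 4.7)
The plan is to prove this via a Chernoff-type bound: apply Markov's inequality to $e^{\lambda_n Y_n^W}$ for a carefully chosen $\lambda_n > 0$, and control the exponential moment $\EE(e^{\lambda_n Y_n^W})$ by iterating Lemma \ref{conditionalprobabilityestimate} from level $n$ down to level $n_0$. I would choose $\lambda_n := r_n^{k-t-\epsilon}$ (so that $\lambda_n r_n^{t-k} = r_n^{-\epsilon}$) and set $\lambda_{0,m} := r_m^\epsilon$ at each intermediate stage. This forces the backward recursion $\lambda_{m-1} := (1+r_m^\epsilon)\lambda_m$, giving
\[
\lambda_{n_0} = \lambda_n \prod_{m=n_0+1}^{n}(1+r_m^\epsilon) \leq \lambda_n \cdot \frac{C(n_0)}{(2\sqrt{d}M)^{d-k}},
\]
where the product converges because $\sum r_m^\epsilon < \infty$ under the uniform bound $M_n\leq M$.

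The first technical step is to verify the feasibility constraint $\lambda_m(2\sqrt{d}r_{m-1})^{d-k}(P_m r_m^d)^{-1} \leq r_m^\epsilon$ required at each application of Lemma \ref{conditionalprobabilityestimate}. Using $\lambda_m \leq \lambda_{n_0}$, the bound $r_{m-1} \leq M r_m$, and the lower bound $P_m \geq r_m^{-s+\epsilon}$ from \eqref{dim}, this reduces to an inequality of the form $r_n^{k-t-\epsilon} \leq c\,r_m^{k-s+2\epsilon}$ for a constant $c$ depending only on $n_0$. Since $r_n \leq r_m$ and $s-t-3\epsilon \geq 2\epsilon > 0$, the ratio is bounded by $r_{n_0+1}^{2\epsilon}$, so the constraint holds once $n_0$ is taken sufficiently large (which we are free to do). Iterated conditioning via the tower property then gives $\EE(e^{\lambda_n Y_n^W}) \leq \EE(e^{\lambda_{n_0} Y_{n_0}^W})$, and since \eqref{choice} furnishes the deterministic bound $Y_{n_0}^W \leq R_0 r_{n_0}^{t-k}$, this is at most $\exp(\lambda_{n_0} R_0 r_{n_0}^{t-k})$.

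Applying Markov's inequality, I obtain
\[
\PP(Y_n^W > R r_n^{t-k}) \leq \exp\bigl(-\lambda_n R r_n^{t-k} + \lambda_{n_0} R_0 r_{n_0}^{t-k}\bigr).
\]
Factoring out $\lambda_n r_n^{t-k} = r_n^{-\epsilon}$ and using $r_n \leq r_{n_0}$ together with $k-t > 0$ (so $(r_n/r_{n_0})^{k-t} \leq 1$), the exponent is bounded above by $-r_n^{-\epsilon}(R - R_0 C(n_0))$, and the hypothesis $R > 2R_0 C(n_0)$ (together with $R_0 C(n_0) \geq 1$, which we may assume by enlarging $R_0$ if necessary) yields the desired bound $\exp(-r_n^{-\epsilon})$. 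The main obstacle in this argument is balancing the feasibility constraint: $\lambda_n$ must be large enough to give useful Markov decay, yet the backward-propagated $\lambda_m$'s must all remain within the regime where Lemma \ref{conditionalprobabilityestimate} applies, which is why the choice $\lambda_{0,m} = r_m^\epsilon$ is delicate — any larger and the product $\prod(1+\lambda_{0,m})$ diverges, any smaller and the per-step feasibility on $\lambda_m$ becomes too tight.
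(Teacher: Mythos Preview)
Your argument is correct and follows essentially the same route as the paper: a Chernoff bound via Markov's inequality on $e^{\lambda Y_n^W}$, followed by iterating Lemma~\ref{conditionalprobabilityestimate} down to level $n_0$ with $\lambda_{0,m}=r_m^{\epsilon}$, and then using the deterministic bound \eqref{choice} at the base. The only real difference is the choice of $\lambda$: the paper takes $\lambda=C(n_0)^{-1}P_n r_n^{k+3\epsilon}$, which builds the constant $C(n_0)^{-1}$ into $\lambda$ so that the feasibility check at each level is automatic, whereas your choice $\lambda_n=r_n^{k-t-\epsilon}$ forces you to enlarge $n_0$ to absorb the constant; conversely, your choice uses only the lower bound $P_m\ge r_m^{-s+\epsilon}$ from \eqref{dim}, while the paper's verification of the feasibility inequality for $n_0<j<n$ also invokes the upper bound $P_n\le r_n^{-s-\epsilon}$.
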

\begin{proof}
Let $\lambda=C(n_0)^{-1}
P_nr_n^{k+3\varepsilon}$. We apply Markov's inequality to the random variable $e^{\lambda Y_n^{W}}$. This gives
\begin{equation}\label{eq:markov}
 \PP\left(Y_n^{W}> Rr_{n}^{t-k}  \right) \leq e^{-\lambda Rr_{n}^{t-k}} \EE(e^{\lambda Y_n^{W}}).
\end{equation}
Now we are going to estimate $\EE(e^{\lambda Y_n^{W}})$. 
By the choice of $\lambda$, we have 
\[
\lambda (2\sqrt{d}r_{n-1})^{d-k}(P_nr_n^{d})^{-1}\leq r_n^{\varepsilon}.
\]
Applying Lemma \ref{conditionalprobabilityestimate} we have
\begin{equation}\label{a}
 \EE \left(e^{\lambda Y_n^{W}}  \Big|E_{n-1}\right)
 \leq e^{(1+r_n^{\varepsilon})\lambda Y_{n-1}^{W}}. 
\end{equation}
The total expectation formula and estimate \eqref{a} imply 
\begin{equation}\label{total}
 \EE \left(e^{\lambda Y_n^{W}}\right)= \EE\left( \EE\left(e^{\lambda Y_n^{W}} \Big| E_{n-1}\right)\right)\leq \EE\left(e^{(1+r_n^{\varepsilon})\lambda Y_{n-1}^{W}}\right).
\end{equation}
By the choice of $\lambda, n\geq n_0$ and estimate \eqref{dim}, we see that
\[
\frac{ \lambda (2\sqrt{d}r_{j-1})^{d-k}   \prod^{n}_{i=j+1}(1+r_i^{\varepsilon})  }{ P_jr_j^{d}} \leq \frac{P_nr_n^{k+3\varepsilon} r_j^{d-k}}{P_jr_j^{d}}\leq r_j^{\varepsilon},
\]
holds for all $n_0<j<n$. Applying \eqref{total} inductively, we have
\begin{equation}
\begin{aligned}\label{eq:c}
\EE \left(e^{\lambda Y_n^{W}}\right)
 \leq \EE\left(e^{\lambda Y_{n_0}^{W}\prod^{n}_{i=n_0+1}(1+r_i^{\varepsilon})}\right)
\leq \exp \left({P_n r_n^
{k+3\varepsilon}R_0r_{n_0}^{t-k}}\right).
\end{aligned}
\end{equation}
The last inequality holds by our choice of $\lambda$ and the condition that  
\begin{equation}
|W\cap E_{n_0}|\leq R_0P_{n_0} r_{n_0}^{t+d-k} 
\end{equation}
holds for all $W \in \Gamma$. Combining the estimates \eqref{eq:c}  and \eqref{eq:markov}, we have 
\begin{equation}
\begin{aligned}
 \PP\left(Y_n^{W}>  Rr_{n}^{t-k}\right) &\leq \exp\left(-\lambda Rr_{n}^{t-k} + {P_n r_n^
{k+3\varepsilon}R_0r_{n_0}^{t-k}}\right) \\
&= \exp(-P_nr_n^{k+3\varepsilon}r_n^{t-k}(C(n_0)^{-1}R-R_0r_{n_0}^{t-k}r_n^{k-t}))\\
& \leq \exp({-r_n^{-\varepsilon}}).
\end{aligned}
\end{equation}
The last inequality holds since $P_nr_n^t\geq r_n^{-4\varepsilon}$ and $R>2R_0 C(n_0)$ (we also ask that $R_0 >1$). 
\end{proof}

Let $n\in \NN$ and $W \in \Gamma$. Denoted by $G_n(W)$ the (good) event 
 \[
|W\cap E_n|\leq RP_{n} r_{n}^{t+d-k} + C(d)r_n^{d-k}.
\]
Let $G_n$ be the event that $G_n(W)$ holds for all $W\in \Gamma_n$. Applying Corollary \ref{use} and Lemma \ref{probabilityestimate}, we have the following result.

\begin{corollary}\label{limsup1}
We have $\PP(\cup^{\infty}_{k=1}
\cap^{\infty}_{n=k} G_n)=1.$
\end{corollary}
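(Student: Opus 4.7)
The plan is to combine the net discretization from Corollary \ref{use} with the single-plane tail bound from Lemma \ref{probabilityestimate}, taking a union bound over the finite approximating set $\Gamma_n'$ and then applying the first Borel--Cantelli lemma.

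First I would fix $n > n_0$ and, for each $W' \in \Gamma'_n \subset \Gamma$, apply Lemma \ref{probabilityestimate} to get
\[
\PP\bigl(|W' \cap E_n| > R P_n r_n^{t+d-k}\bigr) = \PP\bigl(Y_n^{W'} > R r_n^{t-k}\bigr) \leq \exp\bigl(-r_n^{-\varepsilon}\bigr).
\]
Since $\#\Gamma'_n \leq r_n^{-C(d)}$ by Corollary \ref{use}, a union bound yields
\[
\PP\bigl(\exists W' \in \Gamma'_n : |W' \cap E_n| > R P_n r_n^{t+d-k}\bigr) \leq r_n^{-C(d)} \exp\bigl(-r_n^{-\varepsilon}\bigr).
\]

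Next, I would transfer this bound from $\Gamma'_n$ to $\Gamma_n$ using the deterministic inequality of Corollary \ref{use}. On the complementary event, for any $W \in \Gamma_n$ one picks the approximating $W' \in \Gamma'_n$ and estimates
\[
|W \cap E_n| \leq |W' \cap E_n| + C(d) r_n^{d-k} \leq R P_n r_n^{t+d-k} + C(d) r_n^{d-k},
\]
which is precisely $G_n(W)$. Hence $\PP(G_n^c) \leq r_n^{-C(d)} \exp(-r_n^{-\varepsilon})$.

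Finally, using the standing assumption that $M_n$ is uniformly bounded (and $M_n \geq 2$), I would note that $r_n \leq 2^{-n}$, so $r_n^{-\varepsilon} \geq 2^{n\varepsilon}$ grows exponentially while $r_n^{-C(d)}$ also grows only exponentially; consequently $r_n^{-C(d)} \exp(-r_n^{-\varepsilon})$ decays doubly exponentially and is summable in $n$. The Borel--Cantelli lemma then gives $\PP(\limsup_n G_n^c) = 0$, which is equivalent to the claim. The only mildly subtle point is the combinatorial balance between the polynomial-in-$r_n^{-1}$ size of the net $\Gamma'_n$ and the stretched-exponential single-plane tail; the geometric work was already done in Section \ref{geometric part} and the probabilistic work in Lemma \ref{probabilityestimate}, so here nothing more than bookkeeping is required.
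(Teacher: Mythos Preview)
Your proof is correct and follows essentially the same route as the paper: apply the single-plane tail bound of Lemma~\ref{probabilityestimate} to each $W'\in\Gamma_n'$, take a union bound over the $\le r_n^{-C(d)}$ elements of the net, use Corollary~\ref{use} to pass from $\Gamma_n'$ to $\Gamma_n$, and conclude via Borel--Cantelli. Your explicit justification of summability via $r_n\le 2^{-n}$ is a nice addition that the paper leaves implicit.
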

\begin{proof}
By Corollary \ref{use} we know that if  the estimate 
\[
| W^{'}\cap E_{n}^{\omega}|\leq RP_{n}r_{n}^{t+1}
\]
holds for all $W^{'}\in \Gamma_{n}'$, then the estimate
\[
| W\cap E_{n}^{\omega}|\leq RP_{n}r_{n}^{t+1}+C(d)r_n^{d-k}.
\]
holds for any $W \in \Gamma_n$. Thus $\omega \in G_n$. Let $n\geq N_0, n \in\NN$. Then by the above argument we have 
\begin{align*}
\PP ( G_{n}^c )
\leq \PP(|W&\cap E_{n}^{\omega}|> P_{n}R_{n}r_{n}^{t+d-k}\textrm {  
 for some }  W'\in \Gamma^{'}_{n} )\\
 &\leq r_{n}^{-C(d)}\exp (-r_{n}^{-\epsilon} ),
\end{align*}
where $G_{n}^{c}$ is the complement of $G_{n}$. The last inequality holds by Lemma \ref{probabilityestimate}. Note that there are at most $r_{n}^{-C(d)}$ elements in $\Gamma^{'}_{n}$. 

Since the series $\sum^{\infty}_{n=1} r_{n}^{-C(d)}\exp (-r_{n}^{-\epsilon} )$ converges, the Borel-Cantelli lemma implies $\PP( \cap^{\infty}_{k=1}
\cup^{\infty}_{n=k}G_{n}^{c})=0$. 
\end{proof}

Now are going to estimate the distribution of  the natural measure $\mu= \mu^{\omega}$ on strips. 

\begin{corollary}\label{limsup2} 
Let $\omega \in  \cup^{\infty}_{k=1}
\cap^{\infty}_{n=k} G_n$. Then
\begin{equation}\label{part}
\sup_{S\in S(d,d-k)} \frac{\mu^{\omega}(E^{\omega} \cap S) }{w(S)^t}<\infty.
\end{equation}
\end{corollary}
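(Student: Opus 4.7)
The plan is to fix a realization $\omega$ in the almost-sure event of Corollary \ref{limsup1} and, given a strip $S$, to choose a scale $n$ at which the good event $G_n$ applies and $r_n$ is comparable to $w(S)$; then pass from the slice bound supplied by $G_n$ to a strip estimate via Lemma \ref{length and area}, and convert the $\mu$-measure of $E\cap S$ into a cube count divided by $P_n$.

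Concretely, I would fix $\omega\in\bigcup_{k=1}^\infty\bigcap_{n=k}^\infty G_n$ and pick $k_0\geq n_0$ such that $G_n$ holds at $\omega$ for every $n\geq k_0$. For strips $S$ with $w(S)\geq r_{k_0}$, the ratio in \eqref{part} is bounded trivially, since $\mu(E\cap S)\leq\mu(E)=1$ and $w(S)^t\geq r_{k_0}^t$. For the remaining regime $w(S)<r_{k_0}$, I select the unique $n\geq k_0$ with $r_{n+1}<w(S)\leq r_n$; the assumption $M_n\leq M$ forces $r_n\leq Mw(S)$, so $r_n$ and $w(S)$ are comparable up to a factor depending only on $M$.

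Next I would express $\mu(E\cap S)$ through a cube count. Since $\mu$ places mass $P_n^{-1}$ on each $r_n$-adic cube contained in $E_n$, and $E\subset E_n$,
\[
\mu(E\cap S)\leq Z(S,n)\,P_n^{-1}.
\]
Because $w(S)\leq r_n$ and $G_n$ supplies $|W\cap E_n|\leq RP_nr_n^{t+d-k}+C(d)r_n^{d-k}$ for every $W\in\Gamma_n$, Lemma \ref{length and area} applies with this value of $h$ and yields
\[
Z(S,n)\leq C(d)\,r_n^{k-d}\bigl(RP_nr_n^{t+d-k}+C(d)r_n^{d-k}\bigr)=C(d)RP_nr_n^t+C(d)^2.
\]
Dividing by $P_n$ gives $\mu(E\cap S)\leq C(d)Rr_n^t+C(d)^2P_n^{-1}$, and \eqref{dim} supplies $P_n^{-1}\leq r_n^{s-\varepsilon}\leq r_n^t$ (using $s-\varepsilon>t$ and $n\geq n_0$), so $\mu(E\cap S)\leq C'r_n^t\leq C'M^tw(S)^t$ for a constant $C'$ independent of $S$.

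The only real subtlety is the simultaneous scale matching: I need $n\geq k_0$ for $G_n$ to be usable, $w(S)\leq r_n$ for Lemma \ref{length and area} to apply, and $r_n\leq Mw(S)$ so that the final step converts $r_n^t$ into $w(S)^t$ with a uniform constant. The hypothesis $M_n\leq M$ is precisely what allows such an $n$ to be picked for every sufficiently narrow strip; without this uniform boundedness the matching of scales would fail and a more delicate argument would be needed.
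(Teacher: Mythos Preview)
Your proposal is correct and follows essentially the same approach as the paper: choose a scale $n$ with $r_{n+1}<w(S)\le r_n$, invoke $G_n$ together with Lemma~\ref{length and area} to bound $Z(S,n)$, and then convert to a bound on $\mu(E\cap S)$ via $\mu(E\cap S)\le Z(S,n)P_n^{-1}$. The only cosmetic difference is that the paper absorbs the additive term $C(d)r_n^{d-k}$ into $2RP_nr_n^{t+d-k}$ by enlarging $R$, whereas you dispose of it using $P_n^{-1}\le r_n^{s-\varepsilon}\le r_n^t$; your version is arguably cleaner since $R$ was already fixed earlier.
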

\begin{proof}
Since $\omega \in  \cup^{\infty}_{k=1}
\cap^{\infty}_{n=k} G_n$, there exits $n_{\omega}$ such that $E^\omega \in G_{n}$ for all $n \geq n_\omega$. It means the estimate 
\[
|W\cap E_n|\leq RP_{n} r_{n}^{t+d-k} + C(d)r_n^{d-k}\leq 2RP_{n} r_{n}^{t+d-k}
\]
holds for all $n\geq n_\omega$ and all $W \in \Gamma_n$.  The last inequality holds by choosing large $R$. 

Let $S(W) \in S(d,d-k)$ with width $w(S)$. We assume  $w(S) \leq r_{n_\omega}$ first. There exists $n\geq n_\omega$ such that $r_{n+1}< w(S) \leq r_n$. By Lemma \ref{length and area}, we have $Z(S, n) \leq 2C(d)RP_nr_n^t$. Thus 
\[
\mu(S)\leq 2C(d)R  r_n^t \leq 2C(d)R M^t w(S)^{t}.
\]

In the case that $w(S)>r_{n_\omega},$ it's trivial to see that  
\begin{equation}
\sup_{ \substack{S\in S(d,d-k)\\ w(S)>r_{n_\omega} } }\frac{\mu(E \cap S) }{w(S)^t}\leq r_{n_\omega}^{-1}, 
\end{equation}
since  $\mu$ is a probability measure. Thus we complete the proof.
\end{proof}

Notice that all the above claims hold for any $t<s$. Letting $t\rightarrow s$ through a countable sequence gives the proof of Lemma \ref{main lemma}:
\begin{proof}[Proof of Lemma \ref{main lemma}.]
Let $0 <t_1<t_2<\cdots$ such that $t_k\nearrow s$.   For every $t_k$, we denote by $\Omega_k$ the event  
\begin{equation}\label{t_k}
\sup_{S\in S(d,d-k)} \frac{\mu(E \cap S) }{w(S)^{t_k}}<\infty.
\end{equation}
By Corollary \ref{limsup1} and Corollary \ref{limsup2}, we have $\PP(\Omega_k)=1$. So $\PP(\cap_{k=1}^{\infty}
\Omega_k)=1$ as well. Let $\omega \in \cap_{k=1}^{\infty}
\Omega_k $, then $\mu^\omega$ satisfies \eqref{t_k} for every $t_k$.

For any $t<s$, there is $t_k$, such that $t<t_k<s$. We have $w(S)^{t}\geq w(S)^{^{t_k}}$ when $w(S)\leq 1$. Thus 
\begin{equation}\label{last}
\sup_{ \substack{S\in S(d,d-k)\\ w(S)\leq 1} } \frac{\mu(E \cap S) }{w(S)^t}\leq \sup_{ \substack{S\in S(d,d-k)\\ w(S)\leq 1} } \frac{\mu(E \cap S) }{w(S)^{t_k}} <\infty.
\end{equation}
Again since  $\mu$ is a probability measure we have 
\begin{equation}
\sup_{ \substack{S\in S(d,d-k)\\ w(S)>1} } \frac{\mu(E \cap S) }{w(S)^t}\leq1.
\end{equation}
Combing this with the estimate \eqref{last}, the claim follows.
\end{proof}

\end{document}